\documentclass[a4paper,12pt,reqno]{amsart}

 \usepackage[T2A]{fontenc}
 \usepackage[utf8]{inputenc}
 \usepackage[ukrainian,english]{babel}
 \usepackage{amssymb,upref}
\usepackage{amsthm}

\usepackage{graphicx}
\usepackage{mathrsfs}

\usepackage[text={130mm,200mm}]{geometry}

\theoremstyle{plain}
\newtheorem{theorem}{Theorem}
\newtheorem*{theorem*}{Theorem}

\newtheorem*{corollary*}{Corollary}

\newtheorem*{lemma*}{Lemma}

\newtheorem*{proposition*}{Proposition}

\newtheorem*{conjecture*}{Conjecture}
\theoremstyle{definition}
\newtheorem{definition}{Definition}
\newtheorem*{definition*}{Definition}
\theoremstyle{remark}
\newtheorem{remark}{Remark}
\newtheorem*{remark*}{Remark}

\begin{document}

\title[Numeral systems with non-zero redundancy]{Numeral systems with non-zero redundancy and their applications in the theory of locally complex functions}

\author{S. O. Vaskevych}
\address[S. O. Vaskevych]{Institute of Mathematics of NAS of Ukraine, Kyiv, Ukraine\\
ORCID 0009-0005-3979-4543}
\email{svetaklymchuk@imath.kiev.ua}

\author{Yu.Yu. Vovk}
\address[Yu.Yu. Vovk]{K. D. Ushynskyi Chernihiv Regional Institute of Postgraduate Pedagogical Education, Chernihiv, Ukraine}
\email{freeeidea@ukr.net}

\author{O. M. Pratsiovytyi}
\address[O. M. Pratsiovytyi]{Ukrainian State Dragomanov University, Kyiv, Ukraine}
\email{o.m.pratsovytyi@udu.edu.ua}

\subjclass{26A21, 26A30}

% Key words
\keywords{Numeral systems with non-zero redundancy; cylinder; classical $s$--adic numeral system; level set of a function; function of unlimited variation}

\thanks{Bukovinian Math. Journal. 2025, 13, 2, 152–160}
\thanks{This work was supported by a grant from the Simons Foundation (SFI-PD-Ukraine-00014586 V.S.)}

\begin{abstract}
In this paper we study representations of real numbers in a numeral system with the base $a>1$ and alphabet (digits set) $A\equiv\{0,1,...,r\}$, $a-1<r\in N$ given by
\[x=\sum\limits_{n=1}^{\infty}\frac{\alpha_n}{a^n}\equiv
\Delta^{r_a}_{\alpha_1\alpha_2...\alpha_n...}, \alpha_n\in A.\]
Since the alphabet is redundant  the numbers from the interval $[0;\frac{r}{a-1}]$ have not a single representation and can even have a continuous set of different representations.

We describe the geometry (topological and metric properties) of such representations (the $r_a$-representations) in terms of cylinders defined by
\[\Delta^{r_a}_{c_1c_2...c_m}=
\{x: x=\Delta^{r_a}_{c_1c_2...c_ma_1a_2...a_n...},  a_n\in A\},\]
We analyze their properties in detail, including the specific nature of overlaps.

We present results on the structural, variational, topological, metric and partially fractal properties of the function defined by
\[f\left(x=\sum_{n=1}^{\infty}\frac{\alpha_n}{(r+1)^n}\right)=
\Delta^{r_a}_{\alpha_1\alpha_2...\alpha_n...},\alpha_n \in A.\]
We prove the function is continuous at all points of the interval $[0,1]$ that have a unique representation in the classical numeral system on the base $r+1$ and prove  the function is discontinuous at points of a countable everywhere dense set in $[0,1]$. Furthermore, we show that the function is nowhere monotonic and has unlimited variation.

In the particular case $r=1$ and $a=\frac{1+\sqrt{5}}{2}$, we specify fractal level sets with Hausdorff--Besicovitch dimension not less than $-\log_a2$.
\end{abstract}

\maketitle
\section*{Introduction}

Many functions defined on an interval exhibit a locally complicated structure with respect to monotonicity, variation, differentiability, topological–metric, and fractal properties.

In addition to continuous nowhere monotone, nowhere differentiable and singular functions we classify as locally complicated those functions whose set of discontinuity points is countable and everywhere dense in the interval (the domain of definition). In the paper we study one class of such functions.

To define the function we use a  system of number representations with a redundant alphabet and in general, a non-integer base. 
Such systems have been studied in many papers~\cite{1,2,pr3,g4,der1,14_Komornik,1_dist_rv,pratmakarch,pratsmykyt,prats_dop_acad}, 
dealing with various problems, mainly in metric theory (including fractal aspects) 
and probabilistic number theory. Their application to problems of function theory is not observed in these works.

Let $1<a$ be a fixed real number and let $r \in N$ be such that $a-1 \leq r$. 
Let $A=\{0,1,\dots,r\}$ be an alphabet, and let $L = A \times A \times \cdots$ denote 
the set of sequences with elements from the alphabet $A$.

We consider the mapping $\gamma: L \to \left[0,\frac{r}{a-1}\right]$ defined by
\[
\gamma((\alpha_n))=\sum_{n=1}^{\infty}\frac{\alpha_n}{a^n}
\equiv \Delta^{r_a}_{\alpha_1\alpha_2\ldots\alpha_n\ldots}
= x \in \left[0,\frac{r}{a-1}\right], \quad (\alpha_n)\in L.
\]
We call the symbolic expression $\Delta^{r_a}_{\alpha_1\alpha_2\ldots\alpha_n\ldots}$ 
the \emph{$r_a$-representation} of the number $x$, and refer to $\alpha_n$ as 
the \emph{$n$th digit} of this representation.
We point out that the case $r=1$ is of particular interest, as the alphabet $A$ is minimal.

If $a = r + 1$, then the $r_a$-representation coincides with the classical representation of numbers in a positional numeral system with natural base $a$, which has zero redundancy (each number admits at most two representations, and only a countable set of numbers admits two).

\begin{definition}
For rational $a$, we call a number $x$ $r_a$-rational if it admits an $r_a$-representation with period $(0)$.
\end{definition}

It is clear that if $a$ is rational, then the $r_a$-representation is also rational.

We see that every $r_a$-rational number is rational. However, not every rational number is $r_a$-rational. For example, a number $x$ with a purely periodic $r_a$-representation
$\Delta^{r_a}_{(c_1\ldots c_p)}$, whose period contains at least two distinct digits, is rational but not $r_a$-rational. Indeed, we can represent the number $x$ as the value of the expression
$$
x=\Delta^{r_a}_{(c_1\ldots c_p)}=\left( \frac{c_1}{a}+\ldots + \frac{c_p}{a^p} \right)+\frac{1}{a^p}\left( \frac{c_1}{a}+\ldots + \frac{c_p}{a^p} \right)+\frac{1}{a^{2p}}\left( \frac{c_1}{a}+\ldots + \frac{c_p}{a^p} \right)+\ldots=\frac{c}{1-\frac{1}{a^p}},
$$
where $c=\frac{c_1}{a}+\ldots + \frac{c_p}{a^p}$ is a rational number, but (in general) it is not $r_a$-rational.

The condition $r \geq a$ guarantees the existence of numbers that admit more than two representations, and even a continuum of such numbers, which yields nonzero redundancy of the given $(r+1)$-symbol encoding of numbers. Researchers have studied such systems since 1957; the first contributions in this direction appeared in \cite{1,2}.

Today systems for encoding real numbers with nonzero redundancy receive intensive study, and hundreds of works address them \cite{pr3, g4, der1}. Most of these systems involve a base $a$ that is not an integer. Within this framework, researchers consider various objects and solve diverse problems. Several relatively comprehensive surveys summarize the results of these studies~\cite{14_Komornik}, particularly in number theory and fractal analysis of sets.

This number-theoretic topic closely relates to other directions of modern research, including the geometry of numerical series, the theory of infinite Bernoulli convolutions, dynamical systems, and the theory of functions with complicated local structure.

We view systems with nonzero redundancy as a powerful tool for constructing and studying mathematical objects with locally complex topological–metric structure, in particular functions. In this paper, we aim to demonstrate these possibilities.

\section{Geometry of the $r_a$-representation}

Each pair of parameters $a, r$ generates its own distinctive geometry and a specific pattern of overlaps. The properties of cylindrical sets partially reveal this structure. The following cases deserve particular attention: $a$ is a natural number; $a$ is rational; $a$ is irrational.

\begin{definition}
Let $(c_1,c_2,\ldots, c_k)$ be a fixed ordered tuple of elements of the alphabet.
We call the set
 $$\Delta^{r_a}_{c_1c_2...c_k}=\{x: x= \Delta^{r_a}_{c_1...c_k\alpha_1\alpha_2...\alpha_n...}, (\alpha_n)\in L\}.$$
a cylinder ($r_a$-cylinder) of rank $k$ with the base $c_1c_2...c_k$.
\end{definition}

We call the cylinders $\Delta^{r_a}_{c_1...c_{m-1}j}$ and $\Delta^{r_a}_{c_1...c_{m-1}[j+1]}$, $j\in {0,1,...,r-1}$, adjacent; they belong to the same cylinder of the previous rank $\Delta^{r_a}_{c_1...c_{m-1}}$.

Cylinders have the following properties:

1) $\Delta^{r_a}_{c_1...c_k}=\Delta^{r_a}_{c_1...c_k0}\cup\Delta^{r_a}_{c_1...c_k1}\cup...\cup\Delta^{r_a}_{c_1...c_kr}$;

2) $\Delta^{r_a}_{c_1...c_k}=[u;u+d]$, де $u=\sum\limits_{i=1}^{k}\frac{c_i}{a^i}$, $d=\frac{r}{a^{k}\cdot (a-1)}$;

3)  a cylinder length: $|\Delta^{r_a}_{c_1...c_k}|=d=\frac{r}{a^k\cdot (a-1)}\to 0$ ($k\to \infty$);

4)  $\bigcap\limits_{k=1}^{\infty}\Delta^{r_a}_{c_1...c_k}=\Delta^{r_a}_{c_1c_2...c_k...}=x$ for any sequence $(c_k)\in L$;

5) $\min \Delta_{c_1c_2\ldots c_kc}< \min \Delta_{c_1c_2\ldots c_k[c+1]},$ $0\leqslant c\leqslant r-1$;

6) $\Delta^{r_a}_{\alpha_1...\alpha_k}=\Delta^{r_a}_{\beta_1...\beta_k}$ $\Leftrightarrow$ $\sum\limits_{i=1}^{k}a^{-i}(\alpha_i-\beta_i)=0$;

7) $\Delta^{r_a}_{c_1...c_kc}\cap \Delta^{r_a}_{c_1...c_k[c+1]}=[\Delta^{r_a}_{c_1...c_k[c+1](0)};\Delta^{r_a}_{c_1...c_kc(r)}]\neq \varnothing;$

8) an overlap length:
$$|\Delta^{r_a}_{c_1...c_{k-1}c}\cap \Delta^{r_a}_{c_1...c_{k-1}[c+1]}|=
\Delta^{r_a}_{c_1...c_{k-1}c(r)}-
\Delta^{r_a}_{c_1...c_{k-1}[c+1](0)}=
\dfrac{r-a+1}{a^k(a-1)};$$

9) the condition $\Delta^{r_a}_{c_1\ldots c_{k-1}c}\cap \Delta^{r_a}_{c_1\ldots c_{k-1}[c+1]}=\Delta^{r_a}_{c_1\ldots c_{k-1}cr}=\Delta^{r_a}_{c_1\ldots c_{k-1}[c+1]0}$
 is equivalent to 
 $$\dfrac{c}{a^k}+\dfrac{r}{a^{k+1}}=\dfrac{c+1}{a^k}, \text{~~that is~~} r=a,$$
which may hold only for integer values of $a$;

10) an equality $|\Delta^{r_a}_{c_1\ldots c_kc_{k+1}}|=\frac{1}{2}|\Delta^{r_a}_{c_1\ldots c_{k}}|$ holds only under the condition $a=2$;

11) the condition $\Delta^{r_a}_{c_1\ldots c_{k-1}c}\cap \Delta^{r_a}_{c_1\ldots c_{k-1}[c+1]}=\Delta^{r_a}_{c_1\ldots c_{k-1}c\underbrace{r\ldots r}_m}=\Delta^{r_a}_{c_1\ldots c_{k-1}[c+1]\underbrace{0\ldots 0}_m}$
 is equivalent to 
 $$\dfrac{c}{a^k}+\dfrac{r}{a^{k+1}}+\ldots+\dfrac{r}{a^{k+m}}=\dfrac{c+1}{a^k}, \text{~~that is~~} r=\dfrac{a^m(a-1)}{a^m-1}.$$
\begin{remark}
Properties 8)--11) reflect the specific structure of overlaps among $r_a$-cylinders.
\end{remark}

\section{A Special Case}

Let us consider the case $r=2$. Then $1 < a < 3$.

\begin{theorem}
If $r=2$, then the set 
\[C\equiv C[r_a;\{0,2\}]=\{x:~x=\Delta^{r_a}_{\alpha_1\alpha_2...\alpha_n...},
  \alpha_n\in \{0,2\}\; n\in N\}\]
is a perfect, nowhere dense, self-similar Cantor-type set, whose Hausdorff–Besicovitch dimension equals $-\log_a 2$.
\end{theorem}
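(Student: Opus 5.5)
The plan is to recognise $C$ as the attractor of a two-map iterated function system and then apply standard results. Since $\alpha_n\in\{0,2\}$, write $\alpha_n=2\varepsilon_n$ with $\varepsilon_n\in\{0,1\}$, so that
\[
C=\Big\{\sum_{n\ge1}\frac{2\varepsilon_n}{a^n}:\ \varepsilon_n\in\{0,1\}\Big\}.
\]
Set $\varphi_0(x)=x/a$ and $\varphi_1(x)=(x+2)/a$. Then $C=\varphi_0(C)\cup\varphi_1(C)$, and $C$ is the image of $\{0,1\}^{\mathbb N}$ under a continuous coding map. Hence $C$ is compact, self-similar with two similarity ratios equal to $1/a$, and contained in $[0,\tfrac{2}{a-1}]$.

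The first thing I will check is the overlap of the two first-level pieces, using properties 2) and 7) of the cylinders. The pieces are
\[
\varphi_0\big([0,\tfrac{2}{a-1}]\big)=\Big[0,\tfrac{2}{a(a-1)}\Big],\qquad
\varphi_1\big([0,\tfrac{2}{a-1}]\big)=\Big[\tfrac2a,\tfrac2a+\tfrac{2}{a(a-1)}\Big].
\]
They are disjoint iff $\tfrac{2}{a(a-1)}<\tfrac2a$, that is, iff $a>2$. If $a\le 2$, the pieces cover the whole interval, and iterating shows $C=[0,\tfrac{2}{a-1}]$, which is not nowhere dense. Moreover, $-\log_a2<0$, while $\log_a2=\ln2/\ln a$ exceeds $1$ for $a<2$. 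So I will read the statement as concerning $2<a<3$, with dimension $\ln 2/\ln a=\log_a 2$ (the sign is presumably meant in the form $-\log_{a}2^{-1}$). I will state this restriction explicitly at the start of the proof.

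For $2<a<3$, the key steps, in order, are as follows.

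First, I will show that the rank-$k$ "$\{0,2\}$-cylinders" $\Delta^{r_a}_{c_1\ldots c_k}\cap C$ with $c_i\in\{0,2\}$ are pairwise disjoint. By self-similarity this reduces to the first level, where the gap between the two pieces is $\tfrac2a-\tfrac{2}{a(a-1)}=\tfrac{2(a-2)}{a(a-1)}>0$. It follows that each point of $C$ has a unique $\{0,2\}$-coding.

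Second, nowhere density: $C$ is covered by $2^k$ intervals of length $\tfrac{2}{a^k(a-1)}$, so its Lebesgue measure is at most $\tfrac{2}{a-1}(2/a)^k\to0$. A closed null set contains no interval.

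Third, perfectness: for any $x\in C$, changing the $k$-th digit of its coding gives a distinct point of $C$ within distance $\tfrac{2}{a^k(a-1)}$ of $x$. Hence $C$ has no isolated points, and together with compactness and total disconnectedness this makes $C$ a Cantor-type set.

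The dimension is the only real obstacle. Since the first-level pieces are disjoint, the open set condition holds with $V=(0,\tfrac{2}{a-1})$, and Moran's theorem gives $\dim_H C=s$ with $2a^{-s}=1$, i.e.\ $s=\log_a2$. If a self-contained argument is wanted instead of invoking Moran, the upper bound comes directly from the covers by rank-$k$ cylinders: the $s$-dimensional sum is $2^k\big(\tfrac{2}{a^k(a-1)}\big)^s$, which stays bounded. For the lower bound I will apply the mass distribution principle to the uniform Bernoulli measure $\mu$ on codings. The gap $\tfrac{2(a-2)}{a^{k+1}(a-1)}$ between sibling cylinders of rank $k+1$ implies that any interval of length $\rho\approx a^{-k}$ meets at most a bounded number of rank-$k$ cylinders. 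Therefore $\mu(I)\le c\,2^{-k}\le c'|I|^{s}$, which gives $\dim_H C\ge s$.
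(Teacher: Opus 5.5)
Your argument is correct and is essentially the paper's. Both use the nested, pairwise disjoint $\{0,2\}$-cylinders, i.e.\ the two-map system of ratio $1/a$, then the open set condition and Moran's equation. Your explicit restriction to $2<a<3$ with dimension $\log_a 2$ is warranted: the paper's proof silently uses $\Delta^{r_a}_0\cap\Delta^{r_a}_2=\varnothing$, which holds only for $a>2$, and its equation $2\cdot a^x=1$ should read $2\cdot a^{-x}=1$.

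The only slip is in the perfectness step. Flipping the $k$-th digit moves $x$ by exactly $2/a^k$, which exceeds your stated bound $\frac{2}{a^k(a-1)}$ when $a>2$. Since $2/a^k$ still tends to $0$, the conclusion is unaffected.
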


\begin{proof}
Using the $r_a$-cylinders, we can easily describe the structure of the set $C$, namely:
  
  1. $C\subset [\Delta^{r_a}_0\cup\Delta^{r_a}_{2}]$, moreover $\Delta^{r_a}_0\cap\Delta^{r_a}_{2}=\emptyset$, $$\max\{\Delta^{r_a}_{0}\}=\Delta^{r_a}_{0(2)}\in C, \;\; \min\{\Delta^{r_a}_{2}\}=\Delta^{r_a}_{2(0)}\in C.$$
  
  2. $\Delta^{r_a}_{c_1...c_m}\supset(\Delta^{r_a}_{c_1...c_m0}\cup
  \Delta^{r_a}_{c_1...c_m2})$, \;\; $\Delta^{r_a}_{c_1...c_m0}\cap\Delta^{r_a}_{c_1...c_m2}=\emptyset$.
  
  3. $C\subset C_n$,\;$C_n=\bigcup\limits_{\alpha_1\in V}...\bigcup\limits_{\alpha_n\in V}\Delta^{r_a}_{\alpha_1...\alpha_n}$, $V=\{0,2\}$ $\forall n\in N$.
  
  4. $C=\bigcap\limits_{n=1}^{\infty}C_n$.
  
According to the well-known theorem on the structure of perfect sets, we conclude that the set $C$ is perfect and nowhere dense (it contains no however small interval).

The set $C$ is self-similar since
 \[C=[\Delta^{r_a}_{0}\cap C]\cup[\Delta^{r_a}_{1}\cap C],\]
  where $C\stackrel{k}{\sim}(\Delta^{r_a}_{0}\cap C)\cong
  (\Delta^{r_a}_{2}\cap C)$, $k=\frac{r}{a-1}:|\Delta^{r_a}_{0}|=\frac{1}{(a-1)}:\frac{1}{a(a-1)}=a.$
Its self-similarity dimension is the solution of the equation $2 \cdot a^x = 1$, that is, $x = -\log_a 2$. This value coincides with the Hausdorff–Besicovitch dimension, since the set $C$ satisfies the open set condition. 
\end{proof}

\section{A function with fractal level sets properties}

We consider the function $f$ defined by $f(x=\Delta^{r+1}_{\alpha_1\alpha_2\ldots\alpha_n\ldots})=\Delta^{r_a}_{\alpha_1\alpha_2\ldots\alpha_n\ldots}$, 
where
$$\Delta^{r+1}_{\alpha_1\alpha_2\ldots\alpha_n\ldots}=
\sum^{\infty}_{n=1}\dfrac{\alpha_n}{(r+1)^n},\; (\alpha_n)\in L.$$

Since a countable set of numbers in the $(r+1)$-adic system has two formally distinct representations:
$\Delta^{r+1}_{\alpha_1\alpha_2\ldots\alpha_n(0)}=\Delta^{r+1}_{\alpha_1\alpha_2\ldots[\alpha_n-1](r)}$ 
(we call these numbers$(r+1)$-binary), we agree for the function $f$ to be well-defined to use only the first of these $(r+1)$-binary representations.

\begin{theorem}
The function $f$ is continuous at every $(r+1)$-unary point, and it is continuous at a $(r+1)$-binary point if and only if $a = r+1$.
\end{theorem}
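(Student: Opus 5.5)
We argue directly with cylinders of the classical $(r+1)$-adic system, comparing them with the $r_a$-cylinders from Section~1. The basic observation is that $f$ maps the $(r+1)$-adic cylinder $\Delta^{r+1}_{c_1\ldots c_k}$ into the $r_a$-cylinder $\Delta^{r_a}_{c_1\ldots c_k}$. By property~3 the latter has diameter $\frac{r}{a^k(a-1)}\to 0$. Hence any two points whose (chosen) $(r+1)$-adic expansions share the first $k$ digits have $f$-values differing by at most $\frac{r}{a^k(a-1)}$.

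\emph{Continuity at unary points.} Let $x_0=\Delta^{r+1}_{c_1c_2\ldots}$ have a unique representation. Then $x_0$ is not an endpoint of any $(r+1)$-adic cylinder, so for each $k$ it lies in the interior of $\Delta^{r+1}_{c_1\ldots c_k}$. Take $\delta_k>0$ with $(x_0-\delta_k,x_0+\delta_k)\subset\Delta^{r+1}_{c_1\ldots c_k}$. Every $x$ in this neighbourhood has a representation beginning with $c_1\ldots c_k$, and this holds also for the chosen representation when $x$ is binary, because both representations of an interior point start with $c_1\ldots c_k$. Therefore $|f(x)-f(x_0)|\le \frac{r}{a^k(a-1)}$, which gives continuity.

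\emph{Binary points.} Let $x_0=\Delta^{r+1}_{c_1\ldots c_{n-1}c_n(0)}$ with $c_n\ge1$, and let $x_0'=\Delta^{r+1}_{c_1\ldots c_{n-1}[c_n-1](r)}$ be its other representation.
\begin{itemize}
\item Right neighbourhoods $[x_0,x_0+\delta)$ lie in cylinders $\Delta^{r+1}_{c_1\ldots c_n0\ldots0}$. By the argument above, $f(x)\to f(x_0)$ as $x\to x_0^+$, so $f$ is right-continuous.
\item Left neighbourhoods lie in $\Delta^{r+1}_{c_1\ldots [c_n-1]r\ldots r}$, and none of their points is $x_0$. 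So $f(x)\to\Delta^{r_a}_{c_1\ldots c_{n-1}[c_n-1](r)}$ as $x\to x_0^-$.
\end{itemize}
Continuity at $x_0$ is therefore equivalent to
\[
\Delta^{r_a}_{c_1\ldots c_{n-1}[c_n-1](r)}=\Delta^{r_a}_{c_1\ldots c_{n-1}c_n(0)}.
\]
After cancelling the common prefix and multiplying by $a^n$, this reads $c_n-1+\frac{r}{a-1}=c_n$, i.e.\ $r=a-1$, i.e.\ $a=r+1$. This is the same computation as property~8, where the overlap length $\frac{r-a+1}{a^n(a-1)}$ vanishes exactly when $a=r+1$. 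When $a=r+1$ the two one-sided limits agree, since $f$ is then the identity. When $a<r+1$ the left limit exceeds $f(x_0)$ by $\frac{r-a+1}{a^n(a-1)}>0$, so $f$ has a jump.

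\emph{Main obstacle.} The delicate step is the neighbourhood argument. We must check that binary points $x$ near $x_0$, which are evaluated via their first representation, still have chosen representations with the required prefix. For $x$ to the left of a binary $x_0$, this means showing they lie in $\Delta^{r+1}_{c_1\ldots[c_n-1]r\ldots r}$ but not at its right endpoint $x_0$, which is immediate from the ordering of $(r+1)$-adic cylinders. We also use that $r\ge a-1$, as assumed, so the jump is nonnegative and vanishes only in the stated case.
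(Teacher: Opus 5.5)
Your proof is correct and follows essentially the same route as the paper. The diameters $\frac{r}{a^k(a-1)}\to 0$ of the image cylinders give continuity at unary points, and at a binary point the gap $\frac{r-a+1}{a^n(a-1)}$ between the images of the two representations vanishes exactly when $a=r+1$. Your version is in fact more careful than the paper's: you identify the one-sided limits explicitly, and you keep the correct factor $a^{-n}$ where the paper writes $\frac{r}{a^2-a}$.

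The only slip is the claim that a unary point is never a cylinder endpoint. The points $0=\Delta^{r+1}_{(0)}$ and $1=\Delta^{r+1}_{(r)}$ are unary but are endpoints of $\Delta^{r+1}_{0\ldots0}$ and $\Delta^{r+1}_{r\ldots r}$. At those two points you need one-sided neighbourhoods (interior relative to $[0,1]$), and your estimate then applies without change.
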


\begin{proof}
If the function $f$ is continuous on an interval, then it is continuous at every point of that interval. Let $x_0 = \Delta^{r+1}_{\alpha_1\alpha_2...\alpha_n...}$ be a $(r+1)$-unary point, and let $f(x_0)=\Delta^{r_a}_{\alpha_1\alpha_2...\alpha_n...}$. 
Consider a point $x \neq x_0$. Then, since $x \neq x_0$, there exists an index $k$ such that $\alpha_k(x) \neq \alpha_k(x_0)$, while $\alpha_i(x) = \alpha_i(x_0)$ for all $i < k$. Moreover, the condition $k \to \infty$ is equivalent to $x \to x_0$.
To prove the continuity of the function $f$ at the point $x_0$, we show that  
\[\lim\limits_{x\to x_0}|f(x)-f(x_0)|=0.\]
By the definition of the function $f$, we have
 \begin{align*}
    \lim\limits_{x\to x_0}|f(x)-f(x_0)|=&
    \lim\limits_{x\to x_0}|f(\Delta^{r_a}_{\alpha_1\alpha_2...\alpha_{k-1}\alpha'_k\alpha'_{k+1}...})-
    f(\Delta^{r_a}_{\alpha_1\alpha_2...\alpha_{k-1}\alpha_k\alpha_{k+1}...})|=\\
    =&\lim\limits_{k\to\infty}|\Delta^{r_a}_{\alpha_1\alpha_2...\alpha_{k-1}\alpha'_k\alpha'_{k+1}...}-
    \Delta^{r_a}_{\alpha_1\alpha_2...\alpha_{k-1}\alpha_k\alpha_{k+1}...}|\leq\\
    \leq&\lim\limits_{k\to\infty}\frac{1}{a^{k-1}}|\Delta^{r_a}_{\alpha'_k\alpha'_{k+1}...}-
    \Delta^{r_a}_{\alpha_k\alpha_{k+1}...}|=0.
  \end{align*}
Therefore, the function $f$ is continuous at a $(r+1)$-unary point.

Continuity of the function at every $(r+1)$-binary point is equivalent to satisfying the equality for every $k \in N$:
 \[f(\Delta^{r+1}_{\alpha_1\alpha_2...\alpha_{k-1}\alpha_k(0)})=
  f(\Delta^{r+1}_{\alpha_1\alpha_2...\alpha_{k-1}[\alpha_k-1](r)}), \mbox{ for any } k\in N.\]
We can rewrite the last equality in the form:
 \[\sum\limits_{i=1}^{k-1}\frac{\alpha_i}{a^i}+\frac{\alpha_k}{a^k}=
  \sum\limits_{i=1}^{k-1}\frac{\alpha_i}{a^i}+\frac{\alpha_k-1}{a^k}+\frac{r}{a^2-a},\]
wherefrom
\[\frac{\alpha_k}{a^k}=
  \frac{\alpha_k-1}{a^k}+\frac{r}{a^2-a},\]
which is possible only if $a = r+1$. That is, the function $f$ is continuous at every $(r+1)$-binary point if and only if $a = r+1$.
\end{proof}

\begin{remark}
Note that when $a = r+1$, that is, when the $r_a$-representation coincides with the $(r+1)$-representation, we have $f(x) = x$.
\end{remark}

\begin{theorem}
If $a < r+1$, the function $f$ is nowhere monotone.
\end{theorem}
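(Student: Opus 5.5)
To prove that $f$ is nowhere monotone, we show that every interval $(p,q)\subset[0,1]$ contains two pairs of points: one on which $f$ increases and one on which $f$ decreases. Any such interval contains an $(r+1)$-adic cylinder $\Delta^{r+1}_{c_1\ldots c_m}$ of some rank $m$. It therefore suffices to find, inside an arbitrary cylinder $\Delta^{r+1}_{c_1\ldots c_m}$, points $x_1<x_2$ with $f(x_1)<f(x_2)$ and points $x_3<x_4$ with $f(x_3)>f(x_4)$.

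\emph{Increase.} Take $x_1=\Delta^{r+1}_{c_1\ldots c_m0(0)}$ and $x_2=\Delta^{r+1}_{c_1\ldots c_m1(0)}$. Both use the agreed first representation, so
\[
f(x_2)-f(x_1)=\frac{1}{a^{m+1}}>0 .
\]

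\emph{Decrease.} Take the two representations of the same $(r+1)$-binary point as a guide. Set
\[
x_3=\Delta^{r+1}_{c_1\ldots c_m0\underbrace{r\ldots r}_{n}(0)}<x_4=\Delta^{r+1}_{c_1\ldots c_m1(0)} ,
\]
so both points lie in the cylinder. By the definition of $f$,
\[
f(x_3)-f(x_4)=\frac{1}{a^{m+1}}\Bigl(-1+r\sum_{i=1}^{n}a^{-i}\Bigr).
\]
As $n\to\infty$ the bracket tends to $-1+\frac{r}{a-1}$, and this is positive exactly when $a<r+1$. So for $n$ large enough we get $f(x_3)>f(x_4)$ with $x_3<x_4$. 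This is essentially the computation from the proof of Theorem 2, where the jump at a binary point was found to be nonzero when $a\neq r+1$.

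Combining the two cases, $f$ is neither nondecreasing nor nonincreasing on any interval, which proves the theorem. The only delicate point is checking that $x_3$ and $x_4$ are really distinct, correctly ordered points inside the cylinder, and that the finite-$n$ version is used rather than the limit. The limit point is $x_4$ itself, where the convention on binary points applies, so the finite-$n$ choice avoids that issue.
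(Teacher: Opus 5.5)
Your proof is correct and is essentially the paper's argument. Your decreasing pair $\Delta^{r+1}_{c_1\ldots c_m0r\ldots r(0)}<\Delta^{r+1}_{c_1\ldots c_m1(0)}$ (with $n$ large) is exactly the pair the paper uses in its case $a\in(r,r+1)$, and you apply it uniformly; this is legitimate because the inequality $r\sum_{i\le n}a^{-i}>1$ for large $n$ needs only $r>a-1$, so the paper's separate configuration for $1<a\le r$ is not actually needed.
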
  

\begin{proof}
The possible cases are: 1) $1 < a \leq r$; 2) $a \in (r, r+1)$. To prove that the function is nowhere monotone, it suffices to show its non-monotonicity on an arbitrary cylinder of rank $m$.
To be specific, we consider the cylinder
$\Delta^{r+1}_{c_1...c_m}=[\Delta^{r+1}_{c_1...c_m(0)};\Delta^{r+1}_{c_1...c_m (r)}]$.

1) Let $1 < a \leq r$, so that $r+1 - a \geq 1$. Consider, on the cylinder
$\Delta^{r+1}_{c_1c_2...c_m}$ points  
  $x_1=\Delta^{r+1}_{c_1...c_m(0)}$,
  $x_2=\Delta^{r+1}_{c_1...c_mr0r(r-1)}$,
  $x_3=\Delta^{r+1}_{c_1...c_mr1(0)}$. 
We have $x_1 < x_2 < x_3$. Let us consider the differences
\begin{align*}
  f(x_2)-f(x_1)=&f(x_2)>0,\\
  f(x_2)-f(x_3)=&
  \frac{r}{a^{m+1}}+\frac{r}{a^{m+3}}+\frac{r-1}{a^{m+3}(a-1)}-
  \frac{r}{a^{m+1}}-\frac{1}{a^{m+2}}=\\
  =&\frac{r(a-1)+r-1-a(a-1)}{a^{m+3}(a-1)}=
  \frac{ra-r+r-1-a^2+a}{a^{m+3}(a-1)}=\\
  =&\frac{a(r+1-a)-1}{a^{m+3}(a-1)}\geq\frac{a-1}{a^{m+3}(a-1)}>0.
\end{align*}
Since $(f(x_2) - f(x_1))(f(x_2) - f(x_3)) > 0$, the function $f$ is non-monotone on the cylinder $\Delta^{r+1}_{c_1c_2...c_m}$, and due to the arbitrary choice of $\Delta^{r+1}_{c_1c_2...c_m}$, it is nowhere monotone on the entire domain.

2) Let $a \in (r, r+1)$, so that $r+1 - a > 0$.

$\Delta^{r+1}_{c_1...c_m}=[\Delta^{r+1}_{c_1...c_m(0)};\Delta^{r+1}_{c_1...c_m (r)}]$. 
Consider the points
$x_1=\Delta^{r+1}_{c_1...c_m(0)}$, $x_2=\Delta^{r+1}_{c_1...c_m0\underbrace{r...r}_n (0)}$, $x_3=\Delta^{r+1}_{c_1...c_m1(0)}$.
Clearly, we have $x_1 < x_2 < x_3$. Let us examine the differences
  $$
  f_2(x)-f_1(x)=\frac{1}{a^m}
  \left(\frac{r}{a}+...+\frac{r}{a^n}\right)>0,
  $$
  $$
  f(x_3)-f(x_2)=\frac{1}{a^m}
  \left(\frac{1}{a}-\frac{r}{a^2}\cdot\frac{1}{1-\frac{1}{a}}
  \left(1-\frac{1}{a^n}\right)\right)=
  $$
  $$
  =\frac{1}{a^m}
  \left(\frac{1}{a}-\frac{r}{a(a-1)}
  (1-\frac{1}{a^n})\right)=\frac{1}{a^m}
  \left(\frac{a-1-r}{a(a-1)}+\frac{r}{a^{n+1}}(a-1)\right).$$
Let $\frac{r-(a-1)}{a(a-1)}=c>0$, then we have
  \[a^m(f(x_3)-f(x_2))=\frac{a-1-r}{a(a-1)}+\frac{r}{a^{n+1}(a-1)}=
  \frac{r}{a^{n+1}(a-1)}-c.\]
Since $\frac{r}{a^{n+1}(a-1)}$  monotonically tends to 0, there exists 
$n_0$ such that for all $n>n_0$ the inequality $\frac{r}{a^{n+1}(a-1)}<c$ holds. For such 
$n$, we have  $f(x_3)-f(x_2)<0$. Since
\[(f(x_2)-f(x_1))(f(x_3)-f(x-2))<0,\] 
the function $f(x)$ is non-monotone on the cylinder $\Delta^{r_a}_{c_1...c_m}$. Hence, 
$f$ is nowhere monotone.
\end{proof}

\begin{theorem}
If $a < r+1$, then the function $f$ has unbounded variation.
\end{theorem}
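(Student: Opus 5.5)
The plan is to bound the variation of $f$ from below by its jumps at the $(r+1)$-binary points. By the previous theorem, $f$ is discontinuous at each such point when $a\neq r+1$. The key observation is that, for a function on $[0,1]$, the total variation is at least the sum of the absolute jumps over any finite set of points, and these jumps can be computed explicitly.

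First I would compute the jump at a binary point $x_0=\Delta^{r+1}_{c_1\ldots c_{k-1}c_k(0)}$ with $c_k\geq 1$. The value $f(x_0)=\Delta^{r_a}_{c_1\ldots c_k(0)}$ is fixed by the chosen representation. Approaching $x_0$ from the left along $x_n=\Delta^{r+1}_{c_1\ldots c_{k-1}[c_k-1]\underbrace{r\ldots r}_n(0)}$, continuity at unary points (or the direct estimate from the proof of the previous theorem) gives
\[
f(x_n)\to \Delta^{r_a}_{c_1\ldots c_{k-1}[c_k-1](r)}.
\]
Hence the left jump has absolute value
\[
\delta_k=\Bigl|\frac{1}{a^k}-\frac{r}{a^k(a-1)}\Bigr|=\frac{|r+1-a|}{a^k(a-1)},
\]
which is positive since $a<r+1$. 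This is exactly the quantity already appearing in the continuity proof; there the factor $a^{-k}$ on the last term was mistyped, and I would correct it here.

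Next I would count the binary points of rank exactly $k$: there are $(r+1)^{k-1}r$ of them. For any finite collection of them, choose the partition points $x_n$ close enough to each binary point (and to the right of all binary points of lower rank in that neighbourhood) that each contributes at least $\delta_k/2$ to the variation sum. This gives
\[
V_0^1(f)\;\geq\;\frac12\, r(r+1)^{k-1}\,\frac{r+1-a}{a^k(a-1)}=\frac{r(r+1-a)}{2(r+1)(a-1)}\Bigl(\frac{r+1}{a}\Bigr)^k .
\]
Since $a<r+1$, the right-hand side tends to infinity as $k\to\infty$, so the variation is unbounded.

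The step needing the most care is building a single partition that realizes all these jumps simultaneously without the terms cancelling. To handle it, I would take for each rank-$k$ point the pair $(x_n,x_0)$ with the same large $n$ for all of them. These pairs lie in pairwise disjoint intervals $[\Delta^{r+1}_{c_1\ldots c_{k-1}[c_k-1]r\ldots r},\,x_0]$, and these intervals shrink as $n$ grows. Dropping the intermediate terms of the variation sum only decreases it, so
\[
\sum |f(x_0)-f(x_n)|\le V_0^1(f).
\]
For $n$ large, each term is at least $\delta_k/2$ by the explicit formula: the difference is exactly
\[
\frac{1}{a^k}\Bigl(1-\frac{r}{a-1}\bigl(1-a^{-n}\bigr)\Bigr),
\]
in absolute value. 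An added benefit is that this avoids appealing to limits at all.
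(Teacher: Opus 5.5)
Your argument is correct, but it uses a different mechanism from the paper's.

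\textbf{The paper's route.} The paper bounds $V(f)$ from below by summing, over all $(r+1)^n$ cylinders $\Delta^{r+1}_{\alpha_1\ldots\alpha_n}$, the oscillation of $f$ on each cylinder. Each oscillation is at least the length $\frac{r}{a^n(a-1)}$ of the image cylinder $\Delta^{r_a}_{\alpha_1\ldots\alpha_n}$. This gives the bound $\frac{r}{a-1}\left(\frac{r+1}{a}\right)^n\to\infty$. So the divergence comes from the overlapping image cylinders, whose total length grows geometrically, and the discontinuities of $f$ play no role.

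\textbf{Your route.} You measure the jump of $f$ across the right endpoint of each rank-$k$ cylinder. You pair the binary point $x_0$ with the left endpoint $x_n$ of the rank-$(k+n)$ cylinder $\Delta^{r+1}_{c_1\ldots c_{k-1}[c_k-1]r\ldots r}$, whose right endpoint is $x_0$. Your disjointness argument is sound. The exact difference $\frac{1}{a^k}\left(1-\frac{r}{a-1}(1-a^{-n})\right)$ does not depend on $c_1,\ldots,c_k$, so one $n$ works for all rank-$k$ points and a single finite partition suffices.

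One small point: appealing to continuity at unary points for the limit of $f(x_n)$ is misplaced, since $x_n\to x_0$ and $x_0$ is a binary point. Your explicit formula makes that appeal unnecessary anyway.

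\textbf{What each approach buys.} The paper's route is shorter and does not depend on the previous theorem. At $a=r+1$ its bound reduces to exactly the variation $1$ of $f(x)=x$. Your route gives finer information: the jumps at the $(r+1)$-binary points alone already have unbounded total size. This ties unbounded variation directly to the dense set of discontinuities. Your bound also carries the factor $r+1-a$, which correctly vanishes in the continuous case.

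Your correction of $\frac{r}{a^2-a}$ to $\frac{r}{a^k(a-1)}$ in the continuity proof is also right.
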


\begin{proof}
Let $\Delta \equiv [0;\frac{r}{a-1}]$.
The oscillation of the function $f$ on the cylinder
$\Delta^{r+1}_{\alpha_1\alpha_2...\alpha_n}$ equals the length of the cylinder $\Delta^{r_a}_{\alpha_1\alpha_2...\alpha_n}$, which is the image of the cylinder $\Delta^{r+1}_{\alpha_1\alpha_2...\alpha_n}$. Therefore, the variation of the function $f$ exceeds the total length of the image cylinders, that is,
\[V(f)>V_k=\sum\limits_{\alpha_1=0}^{r}
\sum\limits_{\alpha_2=0}^{r}\ldots \sum\limits_{\alpha_n=0}^{r}
|\Delta^{r_a}_{\alpha_1\alpha_2\ldots\alpha_n}|.\]
Since
\[V_1\equiv\sum\limits_{\alpha_1=0}^{r}
|\Delta^{r_a}_{\alpha_1}|=
(r+1)\frac{r}{a(a-1)}>\frac{ar}{a(a-1)}=\frac{r}{a-1}=|\Delta|> 1.\]
Then 
\[k\equiv\frac{V_1}{|\Delta|}>1 \mbox{ і } V_1=k|\Delta|.\]
Similarly,
\[\sum\limits_{\alpha_2=0}^{r}|\Delta^{r_a}_{c_1\alpha_2}|
=|\Delta^{r_a}_{c_1}|\sum\limits_{\alpha_2=0}^{r}
|\Delta^{r_a}_{\alpha_2}|>|\Delta^{r_a}_{c_1}|.\]
Thus
\[V_2\equiv\sum\limits_{\alpha_1=0}^{r}
\sum\limits_{\alpha_2=0}^{r}
|\Delta^{r_a}_{\alpha_1\alpha_2}|=
\sum\limits_{\alpha_1=0}^{r}|\Delta^{r_a}_{\alpha_1}|
\sum\limits_{\alpha_2=0}^{r}|\Delta^{r_a}_{\alpha_2}|
>k^2|\Delta|,\]
\[\ldots\;\;\ldots\;\;\ldots\;\;\ldots\;\;\ldots\;\;\ldots\]
\[V_n=\sum\limits_{\alpha_1=0}^{r}
\sum\limits_{\alpha_2=0}^{r}\ldots \sum\limits_{\alpha_n=0}^{r}
|\Delta^{r_a}_{\alpha_1\alpha_2\ldots\alpha_n}|>
k^n|\Delta|,\]
then
\[V(f)\geq \lim\limits_{n\to\infty} V_n>\lim\limits_{n\to\infty}k^n|\Delta|=\infty.\]
Therefore, the function $f$ has unbounded variation.
\end{proof}

\section{Two-symbol systems}

The case $r=1$ merits special attention due to the minimality of the alphabet. We focus on this setting and assume $1 < a < 2$. In this case, we have
\[|\Delta^{r_a}_{c_1c_2\ldots c_m}|=\dfrac{1}{a^m(a-1)};\]
\[
|\Delta^{r_a}_0 \cap \Delta^{r_a}_1|=|[\Delta^{r_a}_{1(0)};\Delta^{r_a}_{0(1)}]|=
\Delta^{r_a}_{0(1)} - \Delta^{r_a}_{1(0)}= \frac{2-a}{a(a-1)};
\]
\begin{align*}
|\Delta^{r_a}_{c_1\ldots c_{m-1}0} \cap \Delta^{r_a}_{c_1\ldots c_{m-1}1}|&=|[\Delta^{r_a}_{c_1\ldots c_{m-1}1(0)};\Delta^{r_a}_{c_1\ldots c_{m-1}0(1)}]|=\Delta^{r_a}_{c_1\ldots c_{m-1}0(1)} - \Delta^{r_a}_{c_1\ldots c_{m-1}1(0)}=\\
&=\frac{2-a}{a^m(a-1)}.
\end{align*}

We note that certain particular cases are of independent interest. For example, the equality 
$$|\Delta^{r_a}_{c_1\ldots c_{m-1}0} \cap \Delta^{r_a}_{c_1\ldots c_{m-1}1}|=\frac{1}{2}|\Delta^{r_a}_{c_1\ldots c_{m}}|$$ holds only when $a=\frac{3}{2}$. 

In this case we have the equality $\frac{2-a}{a^m(a-1)}=\frac{1}{2}\cdot \frac{1}{a^{m}(a-1)}$, which is equivalent to the condition $a=\frac{3}{2}$.
Under this condition, $[0;\frac{r}{a-1}]=[0;2]$. This case calls for separate consideration.

Another \textbf{unique case is $r=1$, $a=\frac{1+\sqrt{5}}{2}$}. Let us determine under which conditions two adjacent cylinders of rank $m$ intersect in a cylinder of rank $(m+2)$, that is, when
\begin{equation}\label{mitka}
\Delta^{r_a}_{c_1...c_{m-1}0}\cap\Delta^{r_a}_{c_1...c_{m-1}1}=
\Delta^{r_a}_{c_1...c_{m-1}011}=
\Delta^{r_a}_{c_1...c_{m-1}100}.
\end{equation}
The cylinders
$\Delta^{r_a}_{c_1...c_{m-1}011}$ and $\Delta^{r_a}_{c_1...c_{m-1}100}$
have the same rank, so for the cylinders to coincide it suffices that their initial segments match, namely
 \[\min\Delta^{r_a}_{c_1...c_{m-1}011}-
 \min\Delta^{r_a}_{c_1...c_{m-1}100}=0.\]
And this is equivalent to the equality
\[\frac{1}{a^{m-1}}\left(\frac{1}{a^2}+\frac{1}{a^3}-\frac{1}{a}\right)=0,\]
that is $a+1-a^2=0$, which holds only for the single positive value $a=\frac{1+\sqrt{5}}{2}$.

 Since 
 $$\max\Delta^{r_a}_{c_1...c_{m-1}0}=
 \max\Delta^{r_a}_{011}, \quad
 \min\Delta^{r_a}_{c_1...c_{m-1}1}=
 \min\Delta^{r_a}_{100},
 $$
 then the necessary and sufficient condition for satisfying~\eqref{mitka} is $a=\frac{1+\sqrt{5}}{2}$.
 \begin{remark}
 The ordered triples of digits $(1,0,0)$ and $(0,1,1)$ in the $r_a$-representation of a number, for $r=1$ and $a=\frac{1+\sqrt{5}}{2}$, are interchangeable when they appear as consecutive digits in the representation.
 \end{remark}
 \begin{theorem}
  If $r=1$ and $a = \frac{1+\sqrt{5}}{2}$, then the level set $f^{-1}(y_0)$ for $y_0 = \Delta^{r_a}_{(100)}$ is a continuum and has a fractal dimension of at least $\frac{1}{3}$.
 \end{theorem}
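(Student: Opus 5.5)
The plan is to exhibit inside $f^{-1}(y_0)$ an explicit self-similar Cantor set of binary numbers, built by swapping digit triples according to the preceding Remark. Let
\[
E=\{x=\Delta^{2}_{\beta_1\beta_2\ldots}:\ (\beta_{3k+1}\beta_{3k+2}\beta_{3k+3})\in\{(100),(011)\}\ \text{for all } k\ge 0\}.
\]
I will show $E\subset f^{-1}(y_0)$, that $E$ has cardinality of the continuum, and that $\dim_H E=\frac13$. Monotonicity of Hausdorff dimension then gives $\dim_H f^{-1}(y_0)\ge \frac13$.

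\emph{Step 1 (inclusion).} For $a=\frac{1+\sqrt5}{2}$ we have $a^2=a+1$, hence $\frac1{a^2}+\frac1{a^3}=\frac1a$. Therefore a block $100$ at positions $3k+1,3k+2,3k+3$ contributes $a^{-3k-1}$, and the block $011$ at the same positions contributes $a^{-3k-2}+a^{-3k-3}=a^{-3k-1}$. This is exactly the interchangeability stated in the Remark after \eqref{mitka}. So for any choice of blocks,
\[
\sum_{n}\frac{\beta_n}{a^n}=\sum_{k\ge0}a^{-3k-1}=\Delta^{r_a}_{(100)}=y_0,
\]
with absolute convergence justifying the blockwise regrouping.

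There is one subtlety: $f$ is defined through the chosen binary representation, using the first one for $2$-binary points. I must check that every sequence in $E$ is the binary representation actually used by $f$. A sequence built from the blocks $100$ and $011$ never ends in $(0)$ or in $(1)$, since each block contains both digits. Hence every $x\in E$ is a $2$-unary point, $f(x)$ is computed from exactly this sequence, and $f(x)=y_0$. The same observation shows that distinct block choices give distinct points $x$. Thus $E$ is in bijection with $\{0,1\}^{\mathbb N}$ and has cardinality of the continuum.

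\emph{Step 2 (dimension).} $E$ is the attractor of the two similarities
\[
\varphi_1(t)=\frac{4+t}{8},\qquad \varphi_2(t)=\frac{3+t}{8},
\]
corresponding to prefixing the block $100$ (value $4/8$) or the block $011$ (value $3/8$). Both have ratio $\frac18$. Their images of $[0,1]$ are $[\frac48,\frac58]$ and $[\frac38,\frac48]$, which meet only at a point, so the open set condition holds with the open set $(0,1)$. By the same theorem used in Section 2 for $C[r_a;\{0,2\}]$, $\dim_H E$ equals the root of $2\cdot 8^{-s}=1$, namely $s=\frac13$. Hence $\dim_H f^{-1}(y_0)\ge\frac13$.

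The main obstacle I expect is the bookkeeping in Step 1 rather than the dimension count. One must ensure that the equality of $r_a$-values is genuinely blockwise, since the triples are aligned to positions $3k+1,\dots,3k+3$ and must not overlap. One must also ensure that the binary-representation convention for $f$ does not remove points from the level set. Both issues are settled by the two observations above: the triples are disjoint and aligned, and no sequence in $E$ has an eventually constant tail.
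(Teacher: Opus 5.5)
Your proof is correct and follows the paper's route: aligned swaps of the interchangeable triples $100$ and $011$ give a self-similar Cantor set of two ratio-$\frac18$ maps inside $f^{-1}(y_0)$, of dimension $\log_8 2=\frac13$. Your digit set $\{3,4\}$ is the right one and corrects an arithmetic slip in the paper, which writes $C[8;\{4,5\}]$ even though $\frac14+\frac18=\frac38$, so the block $011$ is octal digit $3$ (for instance $\Delta^8_{(5)}=\Delta^2_{(101)}$ is not in the level set); your check that no point of $E$ is $2$-binary also fills in a detail the paper omits.
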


\begin{proof}
According to the previous remark, the number $\Delta^{r_a}_{(100)}$, whose representation has period $(100)$, admits a continuum of distinct $r_a$-representations, since each consecutive triple $(1,0,0)$ can be replaced by the alternative $(0,1,1)$ without changing the value. Therefore, the preimage set $y_0=\Delta^{r_a}_{(100)}$ contains a subset of numbers from the interval $[0,1]$ whose classical 8-adic representations use only the digits $4$ and $5$.
  
In fact,  
  \begin{align*}
    x=&\left(\frac{\alpha_1}{2}+\frac{\alpha_2}{2^2}+\frac{\alpha_3}{2^3}\right)+
\left(\frac{\alpha_4}{2^4}+\frac{\alpha_5}{2^5}+\frac{\alpha_6}{2^6}\right)+
\left(\frac{\alpha_7}{2^7}+\frac{\alpha_8}{2^8}+\frac{\alpha_9}{2^9}\right)+...=\\ =&\frac{4\alpha_1+2\alpha_2+\alpha_3}{2^3}+
\frac{4\alpha_4+2\alpha_5+\alpha_6}{2^6}+
\frac{4\alpha_7+2\alpha_8+\alpha_9}{2^9}+...
  \end{align*}
Since 
  \[\frac{1}{2}+\frac{0}{2^2}+\frac{0}{2^3}=\frac{4}{2^3},
\frac{0}{2}+\frac{1}{2^2}+\frac{1}{2^3}=\frac{5}{2^3},\]
then one of the preimages of the number $y_0=\Delta^{r_a}_{(100)}$ is
$x_0=\Delta^8_{(4)}$, another is $\Delta^8_{(5)}$ and $\Delta^{8}_{(45)}$ and so on. Thus, the entire Cantor-type set
\[C[8;\{4,5\}]=\{x:~ x=\Delta^8_{a_1a_2...a_n...}, \mbox{ where } a_n\in \{4,5\}\}\]
belongs to the level set. Its self-similarity dimension coincides with its Hausdorff–Besicovitch dimension and equals $\log_82=\frac{1}{3}$.
 \end{proof}


\begin{thebibliography}{99}
\bibitem{1}{\it A. Renyi} Representations for real numbers and their ergodic properties, Acta Math. Acad. Sci. Hung., 8 (1957), 477-493.

\bibitem{2}{\it W. Parry} On the $\beta$-expansions of real numbers, Acta Math. Acad. Sci. Hung., 11 (1960),401-416.

\bibitem{pr3}{\it Pratsiovytyi M.V.} Two symbol encodign of real numbres and its aplications. Nauk. Dumka, Kyiv 2022,  316 p.
    
\bibitem{g4}
{\it Goncharenko Ya.V., Mykytiuk I.O} Representations of real numbers in numeral systems with redundant set of digits and their applications. Nauk. Chasop. Nats. Pedagog. Univ. Mykhaila Dragomanova. Ser 1. Fiz.-Mat. Nauky, 2004, \textbf{5}, 255--275. (in Ukrainian).

\bibitem{der1}{\it Derong Kong, Wenxia Li, Fan Lu, Zhiqiang Wang, Jiayi Xu} Univoque bases of real numbers: Local dimension, Devil's staircase and isolated points,
Advances in Applied Mathematics, Volume 121, 2020,

\bibitem{14_Komornik}
{\it Zou Y., Lu J., Komornik V.} {Hausdorff dimension of multiple expansions}. Journal of Number
Theory, 2022, 233, pp.198-227. ff10.1016/j.jnt.2021.06.009

\bibitem{1_dist_rv} {\it Pratsiovytyi M.V., Ratushniak S.P.} {Singular distributions of random variables with independent digits of representation in numeral system with natural base and redundant alphabet}. Matematychni Studii 2025, \textbf{63} (2), 199--209. doi:http://doi.org/10.30970/ms.63.2.199-209
    
\bibitem{pratmakarch} {\it Pratsiovytyi M.V., Makarchuk O.P.} Distribution of random variable represented by a binary  fraction with three identically distributed redundant digits, Ukrainian Mathematical Journal
2022, 66 (1),  79--88

\bibitem{pratsmykyt} {\it Mykytyuk I.O, Pratsiovytyi M.V.} The binary numeral system with two redundant digits and its corresponding metric theory of numbers, Scientific Notes of M.P. Dragomanov National Pedagogical
University. Series: Physical and Mathematical Sciences,
	        2003, vol. 4, 270--290.

\bibitem{prats_dop_acad} {\it Pratsiovytyi M.V.}
Convolutions of singular distributions, Reports of the National Academy of Sciences of Ukraine, 1996,
	vol. 5 (5), 36--42.
\end{thebibliography}
\end{document}